\newtheoremstyle{standard}
{16pt} 
{16pt} 
{} 
{} 
{\bfseries}
{} 
{ } 
{{\thmname{#1~}}{\thmnumber{#2.}}\thmnote{~(#3)}} 
\newtheoremstyle{kursiv}
{16pt} 
{16pt} 
{\itshape} 
{} 
{\bfseries}
{} 
{ } 
{{\thmname{#1~}}{\thmnumber{#2.}}\thmnote{~(#3)}} 
\theoremstyle{standard}
\newtheorem{defn} [subsection]{Definition}
\newtheorem{ex} [subsection]{Example}
\newtheorem{rem} [subsection]{Remark}
\newtheorem{setup} [subsection]{}
\theoremstyle{kursiv}
\newtheorem{thm}[subsection]{Theorem}
\newtheorem{lem} [subsection]{Lemma}
\newtheorem{ass} [subsection]{Assumption}
\newcommand{\tr}{\triangleright}
\newcommand{\R}{\ensuremath{\mathbb{R}}}
\newcommand{\N}{\ensuremath{\mathbb{N}}}
\DeclareMathOperator{\Fl}{Fl}
\newcommand{\coloneq}{\colonequals}
\tikzstyle dtree=[grow'=up,sibling distance=4mm,level distance=4mm,thick]
\tikzstyle dtree node=[scale=0.4,shape=circle,very thin,draw]
\tikzstyle dtree black node=[style=dtree node,fill=black]
\newcommand{\onenode}{
  \begin{tikzpicture}[dtree]
    \node[dtree black node] {}
    ;
  \end{tikzpicture}
}
\newcommand{\twonode}{
\begin{tikzpicture}[dtree]
  \node[dtree black node] {}
  child { node[dtree black node] {} }
  ;
\end{tikzpicture}
}
\newcommand{\threenode}{
\begin{tikzpicture}[dtree]
  \node[dtree black node] {}
  child { node[dtree black node] {} }
  child { node[dtree black node] {} }
  ;
\end{tikzpicture}
}
\newcommand{\threenodezwo}{
\begin{tikzpicture}[dtree]
  \node[dtree black node] {}
  child { node[dtree black node] {}  child { node[dtree black node] {} } }
  ;
\end{tikzpicture}
}
\title{Convergence of Lie group integrators}
\author{Charles Curry\footnote{NTNU Trondheim, \href{mailto:charles.curry@ntnu.no}{charles.curry@ntnu.no}}, Alexander Schmeding\footnote{TU Berlin, \href{mailto:schmeding@tu-berlin.de}{schmeding@tu-berlin.de}}}
\begin{document}

\maketitle

\begin{abstract}
We relate two notions of local error for integration schemes on Riemannian homogeneous spaces, and show how to derive global error estimates from such local bounds. In doing so, we prove for the first time that the Lie-Butcher theory of Lie group integrators leads to global error estimates.
\end{abstract}

\textbf{Keywords:} Lie group methods, homogeneous Riemannian manifold, Gronwall inequality, Runge-Kutta Munthe-Kaas methods, commutator free methods

\smallskip

\textbf{MSC2010:}
 65L20 
 (primary);
  22F30, 
  53C30 
   (secondary)

\tableofcontents
\section*{Introduction}
The 1990s saw rapid development in numerical methods for differential equations on manifolds that are intrinsic in the sense that they preserve the manifold structure by evolving using geometric operations such as group actions and exponentials, see \cite{MR1216986,MR1431350,MR1661989,MR1621080,MR1662814}. The case where the manifold in question is a homogeneous space has received particular focus, as it allows the equation to be phrased in terms of Lie group actions, with important consequences for both practical implementation and theoretical analysis of methods.

Two classes of method in particular, the Runge-Kutta Munthe-Kaas (RKMK) methods and commutator-free methods  (a generalization of Crouch-Grossmann methods) \cite{CFREE} are now widely used in geometric integration theory and have proven themselves in a wide range of problems (see e.g.\ \cite{MR1883629} for a survey).

The order theory of such methods is founded in Lie-Butcher theory, \cite{OW06,MR2407032,MR3350091}, which generalizes the order theory of numerical methods in $\mathbb{R}^n$ rooted in B-series to homogeneous space. Whilst the algebraic side of this field has reached considerable maturity, there remain notable analytic gaps which we aim to fill. In particular, there does not seem to exist a fully satisfactory derivation of either local or global estimates at present. Indeed, to the authors knowledge, only partial results are available, such as \cite{MR1799308} cf.\ also the survey in \cite[Section 9]{MR1883629} and most recently \cite[Section 3]{1805.07578v1}. The main issue here is twofold:
\begin{itemize}
\item local error estimates, even though available, are given by different kinds of estimate (e.g.\ using test functions, using a Riemannian distance) and it is not directly apparent how these properties are related. 
\item global error estimates are largely missing or only available with additional assumptions on the vector fields or the geometry of the manifold.
\end{itemize}
In a bit more detail, a Lie-group method is said to be of order $p$, if the Taylor expansion of flow generated by it (and tested against an arbitrary smooth function) coincides up to order $p$ with the Taylor expansion of the exact solution. This immediately leads to the following local error estimate:
\smallskip

\emph{Suppose $V$ is a $C^{p+1}$ vector field on a manifold $M$, and let $\hat{y}$ be an approximation of the integral curve $y$ of $V$, of order $p$ in the above sense. Then for all $f\in C^{\infty}(M)$,}
\begin{displaymath}
|f(\hat{y})-f(y)| \leq C h^{p+1} \text{ for some constant } C.
\end{displaymath}
Our first main result (Theorem~\ref{thm: locmet:est}) clarifies how the local estimate, given with respect to test functions, implies a comparable estimate involving the Riemannian distance as is often required \cite{MR1799308,MR1883629}. Recall that the natural setting for Lie group methods is a homogeneous Riemannian manifold $(M,g)$\footnote{Every Lie group is a homogeneous Riemannian manifold. We recall the necessary results and facts on these manifolds in Section \ref{sect: Prelim}.}.For the rest of this introduction we will always assume that $(M,g)$ is such a manifold. Then our results subsume the following:
\smallskip

\textbf{Theorem A} \emph{Let $V$ be a $C^{p+1}$ vector field such that $\hat{y}$ approximates the integral curve $y$ of $V$ up to order $p$. Then the local estimate} 
$$d(y(h),\hat{y}(h)) \leq Ch^{p+1}$$
\emph{
holds, where $d$ is the geodesic distance on $M$.}\smallskip

This result settles the first problem mentioned above and clarifies the dependency on the different kinds of error estimates found in the literature. The second point is covered by Theorem~\ref{thm: global}, which shows that global error estimates follow from local estimates involving the Riemannian metric. As such it subsumes the following 
\smallskip

\textbf{Theorem B} \emph{For a $C^{p+1}$ vector field $V$ we fix a sequence $\{\hat{y}_n\}_{i=1,\ldots,n}$ approximating the integral curve of $V$ through $y_0$ at a discrete set of times $t_i$ with $h_i = t_{i+1} - t_i$, and $\max_i h_i = h$. If $\hat{y}$ obeys either of the local estimates above with exponent $p+1$, then we obtain the global estimate}
\[
d(y_n,\hat{y}_n) \leq C h^{p},
\]
We mention here that our results also clarify the dependency of the constants $C$ on the parameters such as the vector field $V$ (which we deliberately suppressed in the above statements of our theorems). These results taken together give a fully rigorous analytic counterpart to the algebraic Lie-Butcher order theory.\medskip

The paper is organized as follows: we begin with a brief overview of Riemannian homogeneous spaces, fixing notation and stating some standard results. We follow this in \textsection 2 by a brief treatment of the local estimates obtained from Lie-Butcher theory, where for the sequel it is important that we establish estimates with explicit remainder terms. The passage from local estimates obtained via Lie-Butcher theory to local estimates using the Riemannian metric is then accomplished in \textsection 3. We conclude in \textsection 4 with a derivation of the global error estimate.

\section{Preliminaries on Riemannian manifolds}\label{sect: Prelim}

In this section we fix the notation and general setting. All of the material here is standard and can be found in books on differential geometry and Riemannian geometry, e.g.\ \cite{MR1666820,MR1330918,MR1393941}. We assume that the reader is familiar with basic concepts such as Riemannian metrics and associated concepts such as (Levi-Civita) connections and covariant derivatives.

\begin{setup}
We let $\N \coloneq \{1,2,\ldots\}$ denote the natural numbers and $\N_0 \coloneq \N \cup \{0\}$. All manifolds in this paper are assumed to be paracompact and finite dimensional. By $(M,g)$ we denote a Riemannian manifold, where we write the following for the data associated to $g$:
\begin{itemize}
\item $g_m$ will be the inner product on $T_mM , m \in M$ with associated norm $\lVert \cdot \rVert_{g_m}$.
\item $\nabla X$ will be the covariant derivative of a vector field $X$.
\item $d \colon M \times M \rightarrow \R$ will be the geodesic length metric induced by $g$.
\end{itemize}
\end{setup}

In general we will be working with a special class of Riemannian manifolds, arising as quotients of isometric Lie group actions: the so called homogeneous Riemannian manifolds (see e.g.\ \cite[B.7]{MR2371700}).

\begin{setup}
By $\Lambda \colon G \times M \rightarrow M$ we denote a (left) Lie group action on $(M,g)$ such that
\begin{enumerate}
\item the action $\Lambda$ is transitive,
\item $\forall g \in G$, the map $\Lambda_g \coloneq \Lambda (g,\cdot)$ is a Riemannian isometry ($g$ is left $G$-invariant).
\end{enumerate}
Note that by the above $(M,g)$ becomes a Riemannian homogeneous space, i.e.\ the isometry group $\text{Iso} (M,g)$ acts transitively. 
This entails that as manifolds $M \cong G/H$ where $H$ is a compact subgroup of $G$ (where $H$ can be identified as the stabiliser subgroup of a point). Denote by $\pi \colon G \rightarrow G/H \cong M$ the canonical quotient map and set $o := \pi (e)$ (where $e$ is the identity element of $G$). 

Finally note that Riemannian homogeneous spaces are geodesically complete \cite[IV. Theorem 4.5]{MR1393941}, i.e.\ geodesics exist for all time. 
\end{setup}

Many manifolds appearing in applications are homogeneous Riemannian manifolds as is recalled in the following example.

\begin{ex}
\begin{enumerate}
\item Every Lie group $G$ is a Riemannian homogeneous space, where $H=\{e\}$ is the identity subgroup and $g$ is a left invariant Riemannian metric
\item Spheres \cite[B. Example 7.13]{MR2371700} and projective spaces \cite[B. Example 7.1]{MR2371700} are homogeneous Riemannian spaces (e.g.\ $\mathbb{S}^n \cong \text{SO}(n)/\text{SO}(n-1)$ where the Riemannian metric is induced by the biinvariant metric on $\text{SO}(n)$)
\end{enumerate}
We refer to the survey \cite{MR1883629} for a wealth of examples on numerical integrators on these spaces which can be treated in the framework of Lie-Butcher theory.
\end{ex}

Let us remind readers who are not familiar with Riemannian geometry that many properties of Riemannian homogeneous spaces might be conveniently formulated as properties of the geodesic length metric $d$. We collect two important facts:

\begin{setup}[Metric view of Riemannian manifolds]\label{setup: hommfd} 
\begin{enumerate}
\item The Riemannian manifold $(M,g)$ is complete if and only if $(M,d)$ is a complete metric space (this is part of the famous Hopf-Rinow theorem \cite[Theorem 1.65]{MR2371700}).
\item A surjective smooth map $f \colon M \rightarrow M$ is an isometry, if and only if it is distance preserving: $d(f(x),f(y)) = d(x,y), \forall x,y\in M$ (see \cite[Theorem 1.75]{MR2371700}).
\end{enumerate}
\end{setup}

Finally, we fix notation concerning vector fields.

\begin{setup}[Vector fields and flows]\label{setup: VF:FL}
We will denote by $\mathcal{X}^p (M)$ the space of all vector fields on $M$ of class $p \in \N \cup \{\infty\}$ (writing $\mathcal{X}(M) \coloneq \mathcal{X}^\infty (M)$).
For $V \in \mathcal{X}^{p} (M)$ we write
 \begin{displaymath}
 \Fl^V_0 \colon \R \times M \supseteq \mathcal{D} (V) \rightarrow M
 \end{displaymath}
for the \emph{flow associated to} $V$. The flow is defined by sending a pair $(t,x_0) \in \mathcal{D}(V)$ ($\mathcal{D}(V)$ is open subset of $\R \times M$) to the solution $y(t) \coloneq y_{x_0} (t) = \Fl^V_0 (x_0,t)$ of the initial value problem 
$$ y' = V(y) \quad y(0)=x_0.$$
We let $\varphi_t \coloneq \Fl^V_0 (t,\cdot)$ be the associated (local) diffeomorphism (assuming that $\mathcal{D}(V) \cap \{t\} \times M$ is non void).
 
Further, the usual conventions (cf.\ e.g.\ \cite[Section V]{MR1666820}) for the derivative operation of a vector field on a $C^k$ function $f$, i.e.\ $V(f)$ and the shorthand $V^k(f) = V(V(\cdots V(f)))$ are used throughout the text. As is common $C^p(M)$ denotes the set of real valued $C^p$-functions on the manifold $M$.

For $f\in C^p(M)$, we let $\varphi_h^*(f) = f\circ \varphi_h$ denote the pullback.    
\end{setup}

In the next section we turn now to Lie-Butcher theory. After a very brief primer on the most important concepts, we are interested in the description of Taylor expansions of exact and numerical solutions to differential equations on Riemannian manifolds.

\section{Lie-Butcher theory and Lie Series estimates}

This section is devoted to giving a precise version of the local estimates deriving from the Lie-Butcher order theory for Lie group integration methods developed in \cite{MR1621080,MR1662814,MR1682404}. 
Only an extremely brief discussion of Lie-Butcher theory is provided to lay the foundation for the following computations. For a friendly introduction to the theory of Lie group integrators we refer the reader to \cite{MR3133432}.

Let us first consider the Taylor expansion of an exact solution of a differential equation given by the vector field $V$ tested against a $C^\infty$ function.

\begin{lem}[Lie Series]
Let $V \in \mathcal{X}^{p+1}(M)$. For any $f \in C^{\infty}(M)$, the pullback action of the flow of $V$ has the Taylor series expansion
\begin{equation}\label{eq: ex:Tay}
\varphi_h^*(f)(x) = f(x) + \sum_{k=1}^{p} \frac{1}{k!} h^k V^k(f)(x) + \frac{1}{(p+1)!}h^{p+1} \varphi^*_t V^{p+1}(f)(x), 
\end{equation}
where $t<h$.
\end{lem}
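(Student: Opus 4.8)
The plan is to read \eqref{eq: ex:Tay} as Taylor's theorem with Lagrange remainder applied to the scalar function
\begin{displaymath}
F \colon h \longmapsto \varphi_h^*(f)(x) = f(\varphi_h(x)),
\end{displaymath}
once its derivatives have been identified. Fix $x \in M$ and $f \in C^\infty(M)$. Since $\mathcal{D}(V)$ is open and contains $\{0\}\times M$, the map $F$ is defined on an open interval about $0$, and it suffices to argue on a compact subinterval with endpoints $0$ and $h$.

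First I would settle the regularity of $F$. By standard smooth dependence results for ordinary differential equations with a $C^{p+1}$ right-hand side (see e.g.\ \cite{MR1666820}), the flow $(s,y) \mapsto \varphi_s(y)$ is $C^{p+1}$ jointly, and moreover $s \mapsto \varphi_s(x)$ is $C^{p+2}$, a solution of $y' = V(y)$ being one order smoother than the vector field. Composing with $f \in C^\infty(M)$ shows that $F$ is $C^{p+2}$, in particular $C^{p+1}$, near $0$, which is more than enough to apply the Lagrange form of Taylor's theorem at order $p+1$.

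Next I would compute the derivatives of $F$. The chain rule together with the defining equation $\tfrac{\dd}{\dd s}\varphi_s(x) = V(\varphi_s(x))$ of the flow gives, for any $g \in C^1(M)$,
\begin{displaymath}
\frac{\dd}{\dd s}\big(g\circ\varphi_s\big)(x) \;=\; \dd g_{\varphi_s(x)}\big(V(\varphi_s(x))\big) \;=\; (Vg)(\varphi_s(x)) \;=\; \varphi_s^*(Vg)(x).
\end{displaymath}
Since for $V \in \mathcal{X}^{p+1}(M)$ and $f \in C^\infty(M)$ the iterated derivative $V^k(f)$ is of class $C^{p+2-k}$ for $1 \le k \le p+1$ (each further application of $V$ costs at most one derivative in a chart), every $V^k(f)$ with $k \le p+1$ is at least $C^1$, so the identity above may be applied repeatedly. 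An induction on $k$ then yields $F^{(k)}(h) = \varphi_h^*\big(V^k(f)\big)(x)$ for $k = 0,1,\dots,p+1$, and $\varphi_0 = \id_M$ gives $F^{(k)}(0) = V^k(f)(x)$ for $k \le p$.

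Finally I would invoke Taylor's theorem with Lagrange remainder for the $C^{p+1}$ function $F$ between $0$ and $h$: there is $t$ strictly between $0$ and $h$ with
\begin{displaymath}
F(h) = \sum_{k=0}^{p} \frac{F^{(k)}(0)}{k!}\,h^k + \frac{F^{(p+1)}(t)}{(p+1)!}\,h^{p+1},
\end{displaymath}
and substituting $F^{(k)}(0) = V^k(f)(x)$ and $F^{(p+1)}(t) = \varphi_t^*\big(V^{p+1}(f)\big)(x)$ reproduces \eqref{eq: ex:Tay}. The step I expect to need the most care is the smoothness bookkeeping --- checking that the hypothesis $V \in \mathcal{X}^{p+1}(M)$ really does provide enough differentiability for $V^{p+1}(f)$ to be defined and for $F^{(p+1)}$ to exist (and be continuous), and that one may legitimately differentiate through the flow up to this order; the remainder of the argument is the classical one-variable Taylor theorem together with the chain rule.
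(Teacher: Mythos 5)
Your proposal is correct and follows essentially the same route as the paper: reduce to the scalar function $t \mapsto \varphi_t^*(f)(x)$, iterate the identity $\tfrac{\dd}{\dd t}\varphi_t^*(g) = \varphi_t^*(V(g))$ while tracking that each application of $V$ costs one degree of differentiability (so $V \in \mathcal{X}^{p+1}(M)$ yields $p+1$ derivatives of $\varphi_t^* f$), and conclude with the one-variable Taylor theorem with Lagrange remainder. The extra remarks on the joint regularity of the flow are harmless but not needed beyond what the induction already provides.
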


\begin{proof}
Assume that $(h,x) \in \mathcal{D}(V)$ (whence $(t,x) \in \mathcal{D}(V)$ for all $0\leq t\leq h$).
It is a standard result \cite[V.\S 5 Propositions 5.2 and 5.3]{MR1666820} that 
\[
\frac{d}{dt} (\varphi_t^* f) = \varphi_t^* V(f).
\]
Now $V(f) \in C^{p+1}(M)$, and each further application of $V$ reduces the differentiability by one degree, so we can iterate this $p+1$ times to obtain $p+1$ derivatives of $\varphi^*_t f$. Fixing $x\in M$, we can view $\varphi^*_t(f)(x)$ as a function of $t$ alone, and the given result follows immediately from Taylor's theorem. 
\end{proof}

A key idea of Lie-Butcher theory is that a numerical scheme yields a Taylor series like expansion, the \emph{Lie-Butcher series}, which can be compared to the Taylor expansion \eqref{eq: ex:Tay} of the exact solution.
To give the Lie-Butcher series of an (numerical) approximation of the exact flow, we require the notion of a covariant elementary differential. A suitably general setting to define these differentials is the following:

\begin{setup}
 Suppose that $\triangleright \colon \mathcal{X}(M) \times \mathcal{X} (M) \rightarrow \mathcal{X}(M), \quad X \triangleright Y := \tilde{\nabla}_X Y$ is a binary product defined through a flat, constant torsion Koszul connection.\footnote{See \cite[Chapters II and III]{MR1393941} for basic informations on the connections used here. We write ``$\tilde{\nabla}$'' to emphasise that the connection will in general not be the Levi-Civita connection \cite[IV.2]{MR1393941} of the Riemannian manifold.} This may be extended to give a product on the enveloping algebra by 
\begin{equation}
\begin{gathered}
(XY)\tr Z = X \tr (Y \tr Z) - (X\tr Y) \tr Z \\
X \tr (YZ) = (X\tr Y) Z + Y(X\tr Z),
\end{gathered}\label{eq: postlie}
\end{equation}
see \cite{MR3350091,1804.08515v3}. Note that we can not define a similar concept for vector fields of class $C^{p}$ for $p\in \N$ as they do not form a Lie algebra (whose universal enveloping algebra the construction uses). However, given a connection as above covariant derivatives of $C^p$ vector fields make sense if one takes care to account for the loss of differentiability.   
\end{setup}

Now we note that in every term of \eqref{eq: ex:Tay}, the vector field $V$ acts (up to $p+1$ times) as a derivation on the test function $f$. Following an idea by Cayley, this situation can conveniently be described using rooted trees:

\begin{setup}{Trees}
For $n\in \N$, a \emph{rooted tree} of degree $n$ is a finite oriented
tree with $n$ vertices. We distinguish one vertex without outgoing edges, the \emph{root} of the tree.
Any vertex can have arbitrarily many incoming edges, and any vertex other than the root has exactly one
outgoing edge. Vertices with no incoming edges are called leaves. A planar rooted tree is a rooted tree together with an embedding in the plane. A planar rooted forest is a finite ordered collection of planar
rooted trees. Here the planar rooted forests are depicted up to order three (with $\emptyset$ being the empty tree):
$$\emptyset \qquad \onenode \qquad \twonode \quad \onenode \, \onenode \qquad \threenodezwo \quad \threenode \quad \twonode \, \onenode \quad \onenode \, \twonode   $$
\end{setup}

We will see now, one one can construct differential operators from forests with edges using the binary product $\triangleright$.  

\begin{defn}[Elementary covariant differentials]\label{defn: elcov}
Let $\tau = B_+(\tau_1,\tau_2,\ldots,\tau_n)$ be a planar tree defined recursively by connecting the branches $\tau_1,\ldots,\tau_n$ from left to right onto a root. We then define the \emph{elementary (covariant) differentials} of $V$ recursively by
\begin{align*}
V_{\bullet} &= V,\\
V_{\tau} &= (V_{\tau_1}\cdots V_{\tau_n})\tr V,
\end{align*}
where $\bullet$ is the tree with a single node and products are expanded via \eqref{eq: postlie}. This is extended to forests of trees $\tau_1 \cdots \tau_n$ by
\[
V_{\tau_1 \tau_2}(f) = V_{\tau_1}\big(V_{\tau_2}(f)\big) - \big(V_{\tau_1}\tr V_{\tau_2}\big)(f),
\]
see \cite[Section 3]{MR3350091} or \cite[5.2]{1804.08515v3}.
\end{defn}

\begin{rem}
\begin{enumerate}
\item The definition of the elementary differentials may differ depending on the setting, for example the connection may be defined implicitly by specification of a rigid frame, or using the action Lie algebroid structure associated to Lie group integration. The specific manner is not important for our presentation.
\item For a forest $\tau$ of order $p$, the construction of the differential operator $V_\tau$ in Definition \ref{defn: elcov} requires one to take (recursively) at most $p$ (covariant) derivatives of the vector field $V$. Thus if the order of the forest $\tau$ is smaller then $p$, we may consider the differential operator $V_\tau$ for any vector field $V \in \mathcal{X}^p(M)$.
\item Again by construction $V_\tau$ takes at most $p$ derivatives of the function $f$ if $\tau$ is a forest of order at most $p$. 
\end{enumerate}
\label{rem: diff:op}
\end{rem}

The typical assumption of Lie-Butcher order theory is that a method of order $p$ admits a Taylor expansion of the following type:

\begin{ass}[Lie-Butcher series]\label{LieButcher}
For any test function $f \in C^\infty (M)$, the pullback action of an approximate flow $\hat{y}$ of order $p$ has a Taylor series expansion
\[
\hat{\varphi}_h^*(f)(x) = f(x) + \sum_{k=1}^{p} \frac{1}{k!} h^k V^k(f)(x)
+ \sum_{|\tau|=p+1} \alpha(\tau) h^{p+1} \hat{\varphi}^*_t V_{\tau}(f), 
\]
where $|\tau|$ is the number of nodes in the forest $\tau$, $\alpha$ is a linear functional on forests and $\hat{\varphi}_h$ is the (local) diffeomorphism associated to the flow $\hat{y}$.
\end{ass}

We emphasise here that due to Remark \ref{rem: diff:op}, it makes sense to consider the Assumption \ref{LieButcher} for $V \in \mathcal{X}^{p+1}(M)$ for $p \in \N$ as the differential operators $V_\tau$ make sense in this regime. 

\begin{rem}
In a universal setting, the Lie-Butcher series of a method is often identified with a character $\alpha(\tau)$, i.e. a multiplicative linear functional on the space of planar forests with concatenation product, see \cite{MR3350091}, in particular \cite[Section 3.3]{MR3350091}. 

The Assumption \ref{LieButcher} is then satisfied whenever $V\in\mathcal{X}^{p+1}(M)$, and the $\alpha$ agrees with the exact solution character $\frac{1}{\sigma(\tau)\tau!}$ on forests with $p$ or fewer nodes (here $\sigma$ is the internal symmetry factor and $\tau!$ is the planar forest factorial character, see \cite{1804.08515v3} for definitions). 
\end{rem}

By comparing the Lie-Butcher series of an approximate flow $\hat{\varphi}$ to the Lie series of the exact flow $\varphi$, we obtain the first local estimate: 
\begin{setup}[Local estimate for smooth test functions]\label{setup: loc1}
Suppose $V\in\mathcal{X}^{p+1}(M)$, and let $\hat{y}$ be an approximation of the integral curve $y$ of $V$, of order $p$ in the sense of Assumption \ref{LieButcher}. Then for all $f\in C^{\infty}(M)$ and $y_0\in M$,
\begin{equation}\label{est:1}
|f(\hat{y})-f(y)| \leq C(V,f) h^{p+1}
\end{equation}
\end{setup}

\begin{rem}\label{rem: constant}
For later use we wish to be very explicit on how the constant $C(V,f)$ from \ref{setup: loc1} \eqref{est:1} depends on $f$.
Comparing the Lie-Butcher series and the Lie series of the exact method we see that the remainder term is given (for an order $p$ method) by 
$$R (V,f)(x) = \frac{1}{(p+1)!}\varphi^*_t V^{p+1}(f)(x) - \sum_{|\tau|=p+1} \alpha(\tau) \hat{\varphi}^*_t V_{\tau}(f).$$
where $(t,x) \in \mathcal{D}(V)$, $\varphi_t$ is the flow of the exact solution and $\hat{\varphi}_t$ the flow of the approximate solution. As $C(V,f)$ can be chosen to be any constant which dominates the norm of the remainder term, we need an estimate on this norm.
Thus
\begin{align*}
\lvert R(V,f) (x)\rvert \lesssim \sup_{t \in [0,h]} \left(\lvert V^{p+1}(f)(\varphi_t (x))\rvert + \sum_{|\tau|=p+1} |V_\tau (f)(\hat{\varphi}_t (x))|\right), 
\end{align*}
where ``$\lesssim$'' denotes an inequality up to constants which neither depend on $f$ nor on $V$. Now by definition (cf.\ Remark \ref{rem: diff:op}) $V^{p+1}$ and $V_\tau$ act on smooth functions as differential operators whose order is at most $p+1$. Thus the terms $V^{p+1}(f)(\varphi_t (x))$ and $V_\tau (f)(\hat{\varphi}_t (x))$ can be computed as linear combinations  (depending on $V, p, \tau$ and the connection $\tilde{\nabla}$) of the partial derivatives of $f$ up to order $p+1$.
Further, we note that to obtain an estimate we only need to give an upper bound on all partial derivatives of $f$ up to order $p+1$ on the compact set $\{\varphi_t (x) \mid t \in [0,h]\} \cup \{\hat{\varphi}_t (x) \mid t \in [0,h]\}$. 
\end{rem}

\section{Local estimates to local metric estimates}\label{sect: metricest}
Our first step is to show that the condition above implies the weaker (but in some senses more natural) condition.

\begin{defn}[Local metric estimate]
Let $\hat{y}$ approximate the integral curve $y$ through $y_0$ at $y(h)$ of order $p\in \N$. Then there is a constant $C= C (\hat{y},y)$ such that
\begin{equation}\label{est:2}
d(y(h),\hat{y}(h)) \leq C h^{p+1}.
\end{equation}
\end{defn}

Note that the condition \eqref{est:2} appeared in the stability analysis for Lie group methods in \cite[Section 9]{MR1883629} and the earlier work by Faltinsen \cite{MR1799308}. However, there the local metric estimate \eqref{est:2} is \textbf{assumed to hold} for a given method to enable the analysis, whereas we will deduce the validity of \eqref{est:2} in case the method satisfies local estimate \eqref{est:1}.

To prove this result we introduce a family of smooth functions which will allow us to deduce the local metric estimate \eqref{est:2} from the local estimate for smooth test functions \ref{est:1}.
To this end, we need smooth functions controlling the geodesic distance. The constructions of these functions in Lemma \ref{lem: smoothaway} and Lemma \ref{lem: smoothnear} below is somewhat technical, hence we postpone it to Appendix \ref{app: auxiliary}.
We need a smooth function which allows us to control the geodesic distance for points ``far away'' from $o$.

\begin{lem}\label{lem: smoothaway}
For $\varepsilon >0$ and $(M,g)$ a connected\footnote{Here connectedness is only needed to make sense of condition 2 in the statement, as for a non-connected manifold it is customary to set $d(x,y)=\infty$ if $x,y$ are from different connected components. Assuming that $M$ is connected is no essential restriction as we will only compare curves lying in the same connected component.} complete Riemannian manifold, there exists $F_\varepsilon \in C^\infty (M)$ with the following properties.
\begin{enumerate} 
\item $F_\varepsilon(o)=0$ and $F_\varepsilon (x) \geq 0,\ \forall x \in M$,
\item if $d(x,o)>\varepsilon$, then $F_\varepsilon (x) \geq d(x,o)$.
\end{enumerate}
\end{lem}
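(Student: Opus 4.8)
The plan is to replace the distance function $d_o := d(\cdot,o)$ — which is continuous, in fact $1$-Lipschitz, but in general not smooth (neither at $o$ nor along the cut locus of $o$) — by a smooth function lying everywhere above it, and then to kill its value at $o$ by multiplying with a suitable cutoff. First I would invoke a standard smoothing result: on the paracompact smooth manifold $M$ one can approximate the continuous function $d_o$ by a smooth $v \colon M \to \R$ with $|v(x)-d_o(x)|<1$ for all $x\in M$ (cover $M$ by coordinate charts, mollify $d_o$ in each chart, and patch the local pieces together with a smooth partition of unity — this is the function version of the Whitney approximation theorem). Setting $\tilde F := v+1 \in C^\infty(M)$ then yields a smooth function with $\tilde F(x) > d_o(x) \ge 0$ for every $x\in M$.

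Next I would build a cutoff adapted to $\varepsilon$. The sets $\{o\}$ and $A_\varepsilon := \{x\in M : d_o(x)\ge\varepsilon\}$ are closed and disjoint (the latter because $d_o$ is continuous, the former because $M$ is Hausdorff, and disjointness because $d_o(o)=0<\varepsilon$), so $\{M\setminus\{o\},\,M\setminus A_\varepsilon\}$ is an open cover of $M$. Taking a subordinate smooth partition of unity $\{\phi_0,\phi_1\}$ with $\operatorname{supp}\phi_0\subseteq M\setminus\{o\}$ and $\operatorname{supp}\phi_1\subseteq M\setminus A_\varepsilon$, I set $\chi := \phi_0$, which satisfies $\chi\in C^\infty(M)$, $0\le\chi\le1$, $\chi\equiv 0$ on a neighbourhood of $o$ (in particular $\chi(o)=0$), and $\chi\equiv 1$ on $A_\varepsilon$ (since $\phi_1$ vanishes there). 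I then define $F_\varepsilon := \chi\cdot\tilde F$, which is smooth. Properties 1 and 2 follow immediately: $F_\varepsilon\ge 0$ because both factors are nonnegative, $F_\varepsilon(o)=\chi(o)\tilde F(o)=0$, and if $d_o(x)>\varepsilon$ then $x\in A_\varepsilon$, hence $\chi(x)=1$ and $F_\varepsilon(x)=\tilde F(x)\ge d_o(x)$.

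I do not expect a genuine obstacle here; the only points requiring a little care are (i) stating the smoothing step with a globally valid error bound rather than merely a local one — which is fine because $M$ is paracompact and the error may be taken constant — and (ii) checking the elementary properties of the cutoff via the partition of unity. It is worth noting that completeness of $(M,g)$, although part of the standing hypotheses, plays no role in this particular lemma; the more delicate of the two auxiliary constructions is the companion statement controlling the distance near $o$, Lemma~\ref{lem: smoothnear}.
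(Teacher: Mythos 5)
Your argument is correct, but it follows a genuinely different route from the paper. You smooth the distance function $d(\cdot,o)$ globally via the Whitney approximation theorem (mollify in charts, patch with a partition of unity) to get a smooth $\tilde F > d(\cdot,o)$, and then kill the value at $o$ with a two-function cutoff subordinate to the cover $\{M\setminus\{o\},\, M\setminus\{d(\cdot,o)\ge\varepsilon\}\}$; all the verifications you give (disjointness of $\{o\}$ and $\{d(\cdot,o)\ge\varepsilon\}$, $\chi\equiv 0$ near $o$, $\chi\equiv 1$ where $d(\cdot,o)\ge\varepsilon$) are sound, and your construction in fact yields the slightly stronger conclusion with ``$\ge\varepsilon$'' that the paper uses in its appendix. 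The paper instead builds $F_\varepsilon$ by hand: it first constructs (Lemma~\ref{lem: pou}) a locally finite cover by connected, relatively compact sets in which exactly one member $U_{i_o}\subseteq B_\varepsilon^d(o)$ contains $o$, and then sums the subordinate partition-of-unity functions weighted by $M_j=\max\{\varepsilon,\sup_{\overline{U}_j}d(o,\cdot)\}$ (with the special term $\varepsilon(1-\chi_{i_o})$ at $o$). Your version is shorter and more conceptual, at the price of invoking the Whitney approximation theorem as a black box; the paper's version is more self-contained, using only partitions of unity, compactness of the $\overline{U}_j$, and continuity of the Riemannian distance, and it produces the auxiliary covering lemma that is stated separately in the appendix. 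Your observation that completeness of $(M,g)$ is not needed here is also consistent with the paper's proof, which uses connectedness only to ensure $d$ is finite; completeness enters only in the companion Lemma~\ref{lem: smoothnear} (via Hopf--Rinow) and in the global estimates.
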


Note that the Riemannian distance from a fixed point is in general only continuous even if restricted to an open set away from $o$. This phenomenon (connected to the cut locus of the Riemannian manifold) prevents us from simply ``smoothing out'' the Riemannian distance at $o$ to obtain the desired smooth function. 
Furthermore, the geodesic distance $d(\cdot,o)$ is also non smooth at $o$ whence we need smooth functions controlling the distance near $o$. 

\begin{lem}\label{lem: smoothnear}
Let $\varepsilon >0$ be so small that the closure of the metric ball $B_\varepsilon^d (o)$ is contained in a manifold chart $(U,\varphi)$. Then there is $N \in \N$ and a family $\{f_n\}_{1 \leq n\leq N} \subseteq C^\infty (M)$ with the following properties 
\begin{enumerate}
\item $f_n (o)=0$,
\item if $d(x,o)< \varepsilon$ then there is $1\leq n_x\leq N$ such that $f_n (x) \geq d(x,o)$.
\end{enumerate}
\end{lem}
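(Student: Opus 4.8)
The plan is to work entirely inside the single manifold chart $(U,\varphi)$ guaranteed by the hypothesis, where the comparison between the Riemannian distance $d(\cdot,o)$ and the flat Euclidean distance of the chart is controlled by a constant. Concretely, pick a slightly smaller radius $\delta < \varepsilon$ so that the closed metric ball $\overline{B_\varepsilon^d(o)}$ sits compactly inside $\varphi(U) \subseteq \R^m$; on this compact set the Riemannian metric coefficients $g_{ij}$ in the chart are bounded above and below, so there is a constant $L \ge 1$ with $d(x,o) \le L \lVert \varphi(x) - \varphi(o) \rVert$ for all $x$ with $d(x,o) < \varepsilon$ (shrinking $\varphi$ by a translation we may assume $\varphi(o) = 0$). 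Thus it suffices to produce finitely many smooth functions on $M$ that, near $o$, dominate $L \lVert \varphi(x)\rVert$.

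Next I would build these functions from the chart coordinates. Write $\varphi = (\varphi^1,\ldots,\varphi^m)$. Since $\lVert \varphi(x) \rVert \le \sum_{j=1}^m \lvert \varphi^j(x)\rvert$, it is enough to control each $\lvert \varphi^j \rvert$ separately, and $\lvert \varphi^j \rvert$ is itself dominated by $\max(\varphi^j, -\varphi^j)$, i.e.\ by the larger of two \emph{smooth} functions $\pm \varphi^j$. So the natural candidate family has $N = 2m$ members $f_{2j-1} = m L\, \chi \cdot \varphi^j$ and $f_{2j} = -m L\, \chi \cdot \varphi^j$, where $\chi \in C^\infty(M)$ is a bump function equal to $1$ on $\overline{B_\delta^d(o)}$ and supported inside the coordinate ball, inserted so that each $f_n$ is globally defined and smooth on all of $M$ (a standard consequence of paracompactness). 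Each $f_n$ vanishes at $o$ because $\varphi^j(o)=0$, giving property (1). For property (2): given $x$ with $d(x,o) < \varepsilon$ — in fact we only need this for $d(x,o) < \delta$, and one arranges from the start that the "near" regime in the statement is this $\delta$ — we have $\chi(x) = 1$, and choosing the index $n_x$ corresponding to the coordinate $j$ of largest absolute value and its correct sign yields $f_{n_x}(x) = m L \lvert \varphi^j(x)\rvert \ge L \lVert \varphi(x)\rVert \ge d(x,o)$.

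The one genuine subtlety — and the main obstacle — is reconciling the radius "$\varepsilon$" appearing in the hypothesis with the radius on which the cutoff $\chi$ equals $1$: the construction above only controls $x$ with $d(x,o)$ less than the inner radius $\delta$ where $\chi \equiv 1$, whereas the statement quantifies over $d(x,o) < \varepsilon$. The clean fix is to observe that the hypothesis is scalable: starting from any chart containing $\overline{B_\varepsilon^d(o)}$, the same argument applied with the cutoff supported in (say) $B^d_{2\varepsilon}(o)$ — which we may also assume lies in the chart, after possibly shrinking $\varepsilon$ at the outset, since enlarging the radius only shrinks the admissible $\varepsilon$ and the lemma is stated for "$\varepsilon$ small enough" — gives $\chi \equiv 1$ on $\overline{B^d_\varepsilon(o)}$ and hence property (2) on the full range $d(x,o) < \varepsilon$. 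Everything else (smoothness, the metric-vs-Euclidean comparison constant, the finite count $N = 2m = 2\dim M$) is routine, and as the authors indicate this technical bookkeeping is exactly why the proof is deferred to Appendix~\ref{app: auxiliary}.
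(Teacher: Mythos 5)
Your construction is essentially the paper's: compare $d(\cdot,o)$ with the chart's Euclidean distance via a Lipschitz constant on the compact set $\overline{B_\varepsilon^d(o)}$ (compactness requires completeness of the homogeneous space plus Hopf--Rinow, which you use implicitly and the paper invokes explicitly via \ref{setup: hommfd}), dominate the Euclidean norm by finitely many smooth, linear-in-coordinates functions carrying both signs, and glue into $C^\infty(M)$ with a cutoff. The only cosmetic difference is that you use $N = 2\dim M$ scaled coordinate functions $\pm mL\,\chi\,\varphi^j$, while the paper uses the $2^{\dim M}$ signed sums $\sum_i \pm x_i$ of coordinates; both families do the job.

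However, the ``one genuine subtlety'' you single out is not genuine, and your fix weakens the statement. A smooth bump function can be chosen equal to $1$ on the \emph{whole} compact set $K = \overline{B_\varepsilon^d(o)}$ and supported in the open chart domain $U \supseteq K$: the room needed for a cutoff is the relation ``compact set inside open set'', not a larger concentric metric ball. This is exactly what the paper does ($\xi|_K \equiv 1$, $\xi|_{M\setminus U}\equiv 0$), and it yields property (2) on all of $\{d(x,o)<\varepsilon\}$ with no change of hypothesis. Your workaround --- demanding that $B_{2\varepsilon}^d(o)$ also lie in the chart and reading the lemma as ``for $\varepsilon$ small enough'' --- proves a nominally weaker statement than Lemma~\ref{lem: smoothnear}, which asserts the conclusion for \emph{every} $\varepsilon$ whose closed ball fits in a chart; it would still suffice for the application in Theorem~\ref{thm: locmet:est}, where $\varepsilon$ is freely chosen, but the detour is unnecessary. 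A last small point: your one-line derivation of $d(x,o) \le L\lVert \varphi(x)\rVert$ from bounded metric coefficients tacitly assumes the straight segment from $\varphi(o)$ to $\varphi(x)$ stays in the region where those bounds hold; this is easily patched (bound the metric on a Euclidean ball $\lVert u\rVert \le r$ inside $\varphi(U)$ and use the trivial estimate $d(x,o)<\varepsilon\le(\varepsilon/r)\lVert\varphi(x)\rVert$ when $\lVert\varphi(x)\rVert\ge r$), and the paper is no more detailed here, simply citing local bi-Lipschitzness of charts.
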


Note that the functions $\{f_n\}_n$ are also allowed to take negative values (which they will take on a neighborhood of $o$!). This is unavoidable if one wants to obtain smooth functions which dominate the distance (at least in some directions) and are $0$ at $o$.
We now have all technical tools assembled to prove the main result of this section: 

\begin{thm}\label{thm: locmet:est}
Let $(M,g)$ be a homogeneous Riemannian manifold with $G$-invariant Riemannian metric and $V \in \mathcal{X}^{p+1} (M)$. Assume that $\hat{y}$ approximates the integral curve $y = \Fl^V_0 (\cdot , y_0)$ up to order $p$ as in Assumption \ref{LieButcher}. Then $\hat{y}$ satisfies the local metric estimate \eqref{est:2}.
\end{thm}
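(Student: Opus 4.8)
The plan is to reduce the geodesic distance bound to the test-function estimate \eqref{est:1} by using the functions constructed in Lemmas \ref{lem: smoothaway} and \ref{lem: smoothnear}. The key observation is that the $G$-invariance of the metric lets us move any point to the base point $o$ by an isometry, so it suffices to control $d(x,o)$ for $x$ in a neighbourhood of $o$, and we have smooth functions for exactly that purpose. First I would fix the data: let $y = \Fl^V_0(\cdot,y_0)$ and let $\hat y$ be the order-$p$ approximation, so that $y(0) = \hat y(0) = y_0$ and both $y(h)$ and $\hat y(h)$ lie, for $h$ small, in an arbitrarily small neighbourhood of $y_0$. Pick $g \in G$ with $\Lambda_g(y_0) = o$ (transitivity); since $\Lambda_g$ is an isometry, $d(y(h),\hat y(h)) = d(\Lambda_g(y(h)),\Lambda_g(\hat y(h)))$, and $\Lambda_g \circ y$, $\Lambda_g \circ \hat y$ are the exact and approximate integral curves of the pushed-forward vector field $(\Lambda_g)_* V \in \mathcal{X}^{p+1}(M)$ through $o$. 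The approximation order is preserved under the diffeomorphism $\Lambda_g$ (the Lie--Butcher series transforms covariantly, or one simply notes that Assumption \ref{LieButcher} is a statement about pullbacks of test functions, which is diffeomorphism-equivariant). Hence, renaming, we may assume $y_0 = o$.

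Next I would split into the ``near'' and ``far'' cases according to a fixed $\varepsilon > 0$ chosen small enough that Lemma \ref{lem: smoothnear} applies (closed $\varepsilon$-ball inside a chart). Set $z \coloneq \hat y(h)$; note $d(z,o) = d(z, y(h)\text{-ish})$ requires a further reduction, so actually I would first apply the same isometry trick a second time to move $y(h)$ to $o$: pick $g_h \in G$ with $\Lambda_{g_h}(y(h)) = o$, and set $z_h \coloneq \Lambda_{g_h}(\hat y(h))$, so that $d(y(h),\hat y(h)) = d(o, z_h)$. Now apply the test-function estimate \eqref{est:1} from \ref{setup: loc1}: since $f(\hat y(h)) = f(z_h)\circ(\text{nothing})$... more carefully, $f(\Lambda_{g_h}^{-1}(z_h)) = f(\hat y(h))$, so applying \eqref{est:1} to the function $f \circ \Lambda_{g_h}^{-1} \in C^\infty(M)$ gives $|f(z_h) - f(o)| \le C(V, f\circ\Lambda_{g_h}^{-1})\, h^{p+1}$ for every $f \in C^\infty(M)$, because $y(h)$ is carried to $o$ and $f\circ\Lambda_{g_h}^{-1}(y(h)) = f(o)$. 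If $d(z_h,o) < \varepsilon$, apply this with each $f = f_n$ from Lemma \ref{lem: smoothnear}: for the appropriate index $n_{z_h}$ we get $d(z_h,o) \le f_{n_{z_h}}(z_h) = f_{n_{z_h}}(z_h) - f_{n_{z_h}}(o) \le \big(\max_n C(V, f_n\circ\Lambda_{g_h}^{-1})\big) h^{p+1}$. If instead $d(z_h,o) \ge \varepsilon$, apply it with $f = F_\varepsilon$ from Lemma \ref{lem: smoothaway}: $d(z_h,o) \le F_\varepsilon(z_h) = F_\varepsilon(z_h) - F_\varepsilon(o) \le C(V, F_\varepsilon\circ\Lambda_{g_h}^{-1}) h^{p+1}$. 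Taking $C$ to be the maximum of the finitely many constants appearing covers both cases. (For $h$ small enough the far case does not even occur, since $d(o,z_h) = d(y(h),\hat y(h)) \to 0$; but including it makes the estimate hold for all $h$ in a fixed interval without extra fuss.)

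The main obstacle is controlling the dependence of the constants $C(V, f\circ\Lambda_{g_h}^{-1})$ on the isometry $g_h$, which varies with $h$. By Remark \ref{rem: constant}, $C(V,f)$ is governed by the sup of partial derivatives of $f$ up to order $p+1$ over the compact set swept out by the exact and approximate flows on $[0,h]$. Here two features save us: first, $\Lambda_{g_h}$ is an isometry, so it does not distort the Riemannian-geometric quantities (covariant derivatives, the metric) used to bound $V_\tau$ and $V^{p+1}$ — only the coordinate expressions change, but one can phrase the bound invariantly via $\|\nabla^{(k)} V\|_g$ and $\|\nabla^{(k)} f\|_g$; second, as $h \to 0$ all the relevant points converge to $o$ (resp. $y_0$), so the compact sets in question are contained in one fixed compact neighbourhood uniformly in $h$ (say for $h \le h_0$), on which the finitely many functions $F_\varepsilon, f_1,\dots,f_N$ and the vector field $V$ have bounded derivatives of all needed orders. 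Thus $\sup_{h \le h_0} C(V, f\circ\Lambda_{g_h}^{-1})$ is finite for each of the finitely many test functions, and their maximum is the desired constant $C$. I would package this uniformity as a short lemma or inline remark, invoking Remark \ref{rem: constant} and the isometry-invariance of the Levi-Civita data (noting $\tilde\nabla$ enters only through $V_\tau$, which is likewise transported by the diffeomorphism). This finishes the proof: $d(y(h),\hat y(h)) = d(o, z_h) \le C h^{p+1}$, which is \eqref{est:2}.
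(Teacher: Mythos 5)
Your proposal is correct in outline and reaches the same conclusion, but it takes a genuinely different route at the one point where all the work lies: uniformity of the constant in the step size. Like the paper, you reduce $d(y(h),\hat y(h))$ to a distance to $o$ by an isometry and then feed the comparison functions of Lemmas \ref{lem: smoothaway} and \ref{lem: smoothnear} into the test-function estimate \eqref{est:1}. The paper, however, chooses the isometries coherently in $t$: it lifts $y$ to a $C^{p+2}$-curve $\tilde y$ in $G$ and works with $\omega_n(t,\cdot)=f_n\circ\Lambda_{\tilde y(t)^{-1}}$; continuity of $t\mapsto\omega_n(t,\cdot)$ into $C^\infty(M)$ with the compact-open $C^\infty$-topology, plus compactness of $[0,h]$, bounds the relevant $C^{p+1}$-seminorms on the trajectory set $K$ uniformly in $t$, giving $d(\hat y(t),y(t))\le Ct^{p+1}$ for all $t\in[0,h]$ at once. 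You instead pick, for each $h$, an arbitrary $g_h$ with $\Lambda_{g_h}(y(h))=o$ and argue that the constants $C(V,f_n\circ\Lambda_{g_h})$ are bounded independently of $h$ because isometries preserve the metric data and the translated trajectories stay in a fixed compact set; indeed $d(\Lambda_{g_h}(z),o)=d(z,y(h))\le\operatorname{diam}(K)$ for $z\in K$, and completeness (Hopf--Rinow) puts all sets $\Lambda_{g_h}(K)$ inside one fixed compact ball on which the finitely many functions $f_1,\dots,f_N,F_\varepsilon$ have bounded derivatives. This avoids the horizontal lift and the exponential-law/continuity-into-$C^\infty(M)$ machinery, at the price of having to prove the invariance statement you only sketch.

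Two points need fixing in that sketch. First, a small slip of direction: with $\Lambda_{g_h}(y(h))=o$, the test function to insert into \eqref{est:1} is $f\circ\Lambda_{g_h}$ (so that $(f\circ\Lambda_{g_h})(\hat y(h))=f(z_h)$ and $(f\circ\Lambda_{g_h})(y(h))=f(o)$), not $f\circ\Lambda_{g_h}^{-1}$. Second, you assert that $\tilde\nabla$ ``enters only through $V_\tau$, which is likewise transported by the diffeomorphism''; but the paper assumes only that $\tilde\nabla$ is a flat, constant-torsion connection, not that it is $G$-invariant, so $\Lambda_{g_h}$ need not transport $V_\tau$. This is repairable without that hypothesis: in your argument only the test function is composed with the isometry, so write $V^{p+1}$ and the $V_\tau$ as differential operators of order at most $p+1$ whose coefficients are built from $V$ and $\tilde\nabla$ (hence bounded on the fixed set $K$), acting through Levi-Civita covariant derivatives of the test function, and use that isometries are affine for the Levi-Civita connection, so $\lVert\nabla^{(k)}(f\circ\Lambda_{g_h})\rVert_g$ at $z$ equals $\lVert\nabla^{(k)}f\rVert_g$ at $\Lambda_{g_h}(z)$, which is controlled on the fixed compact ball above. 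For the same reason your preliminary ``WLOG $y_0=o$'' reduction, which pushes $V$ forward and tacitly uses equivariance of the Lie--Butcher expansion (i.e.\ $G$-invariance of $\tilde\nabla$), is best simply dropped; your second reduction never transforms $V$ or the curves, so nothing is lost.
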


\begin{proof} We proceed in several steps to obtain the estimate from the family of smooth functions constructed in Lemma \ref{lem: smoothnear} and Lemma \ref{lem: smoothaway}. To this end, let $y \colon [0,h] \rightarrow M$ be the integral curve $y (t) \coloneq \Fl^V_0 (t,y_0)$ of a $C^{p+1}$-vector field $V$ defined (at least) on the compact interval $[0,h]$ with $h>0$. By standard arguments \cite[Section IV]{MR1666820} $y$ is a $C^{p+2}$-mapping.

\paragraph{Step 1: Smooth shift from $y(t)$ to $o$:}
 Since $M$ is a homogeneous space, there exists a $C^{p+2}$-curve $\tilde{y} \colon [0,h] \rightarrow G$ which lifts $y$, i.e.\ $\pi \circ \tilde{y}= y$.\footnote{Here we use that a homogeneous space is a principal $H$-bundle, whence a $C^{p+2}$-curve admits a $C^{p+2}$ horizontal lift, cf.\ e.g. \cite[Chapter 5.1]{MR2021152}.}
Note that the lift is non-unique but the estimates will not depend on the choices made.
By construction $\Lambda_{\tilde{y}(t))^{-1}} (y(t)) = o, \quad \forall t \in [0,h]$. Using left invariance of the metric, for every $t\in [0,h]$ the map $\Lambda_{\tilde{y}(t)^{-1}}$ is an isometry, whence for $x\in M$ and $t \in [0,h]$,
\begin{equation}\label{eq: inv:met}
d(x,y(t)) = d(\Lambda_{\tilde{y}(t)^{-1}} (x)),\Lambda_{\tilde{y}(t)^{-1}} (y (t))) = d(\Lambda_{\tilde{y}(t)^{-1}} (x),o).
\end{equation}

\paragraph{Step 2: An $h$-dependent family of smooth comparison functions.}
Choose $\varepsilon >0$ such that the closure of the ball $B \coloneq B_\varepsilon^d (o)$ is contained in a chart $(U,\varphi)$.
By Lemma \ref{lem: smoothnear} we obtain a family of smooth functions $\{f_n\}_{1\leq n\leq N}$ (where we can choose $N=2^{\text{dim} M}$) which controls the geodesic distance on $B$ such that every $f_n$ vanishes in $o$.
Then Lemma \ref{lem: smoothaway} applied for the same $\varepsilon$ yields $F_\varepsilon \in C^\infty (M)$ which controls the Riemannian distance $d(\cdot,o)$ outside of $B$ and satisfies $F_\varepsilon (o) =0$.

We construct now the functions which will yield the necessary estimates: 
\begin{align*}
\omega_n \colon [0,h] \times M &\rightarrow \R,\quad (t,x) \mapsto f_n \circ \Lambda_{y(t)^{-1}} (x),\quad 1\leq n \leq N\\
\omega_{N+1} \colon [0,h] \times M &\rightarrow \R,\quad (t,x) \mapsto F_{\varepsilon} \circ \Lambda_{y(t)^{-1}} (x).
\end{align*}
By construction the $\omega_n$ are $p+2$-times continuously differentiable with respect to $t$ and every of these differentials is smooth with respect to $x$. Thus we obtain continuous maps into the space $C^\infty (M)$ endowed with the compact open $C^\infty$-topology via\footnote{Functions with the differentiability exhibited by $\omega_n$ are called $C^{p+2,\infty}$-functions in \cite{MR3342623}. Indeed that $\omega_n$ is $C^{p+2,\infty}$ follows from the chain rules in ibid. The continuity of $\omega^\vee_n$ into the locally convex space $C^\infty (M)$ is a consequence of the exponential law \cite[Theorem B]{MR3342623} which even shows that $\omega^\vee_n$ is a $C^{p+2}$ map. Since continuity is sufficient for our purposes we do not need to explain what differentiable functions into the (non normable!) space $C^\infty(M)$ are.}
$$\omega^\vee_n \colon [0,h] \rightarrow C^\infty(M), \omega^\vee (t) \coloneq \omega_n (t,\cdot) \quad 1\leq n \leq N+1.$$ 
Recall that the compact open $C^\infty$ topology is generated by the family of seminorms which control the growth of a function and up to finitely many of its derivatives on some compact subset in $M$ (cf.\ e.g.\ \cite{MR3626202} for more on topologies for $C^\infty(M)$). Since $\omega_n^\vee$ is continuous, $\omega_n^\vee ([0,h])$ is compact whence every continuous seminorm of $C^\infty (M)$ is bounded on the image of $\omega_n^\vee$. Summarising: The growth of (up to finitely many) derivatives of the functions $\omega_n^\vee (t)$ on a given compact set can be uniformly bounded in $t$.
Finally, we note that by construction $\omega_n^\vee (t)(y(t)) = 0$ for all $1\leq n\leq N+1$ and $t\in [0,h]$.

\paragraph{Step 3: The metric estimate.}
By construction, the functions obtained via Lemma \ref{lem: smoothnear} and \ref{lem: smoothaway} control the geodesic distance of a point to $o$. Hence \eqref{eq: inv:met} implies that for $d(\hat{y}(t),y(t))\geq \varepsilon$ the function $\omega_{N+1}^\vee (t)$ controls $d(\hat{y}(t),y(t))$, while for $d(\hat{y}(t),y(t))<\varepsilon$ one of the functions $\omega_n^\vee (t), 1\leq n\leq N$ controls $d(\hat{y}(t),y(t))$.
We deduce that for every $t \in [0,h]$ there is some index $n_t$ such that
\begin{align}\label{est: depend}
d(\hat{y}(t) , y (t)) \leq \omega_{n_t}^\vee (t)(\hat{y}(t)) = | \omega_{n_t}^\vee (t)(\hat{y}(t)) - \underbrace{\omega_{n_t}^\vee (t)(y(t))}_{=0}| \leq C_{\omega_{n_t}^\vee (t)} t^{p+1}
\end{align}
where the last inequality follows from the local estimate \eqref{est:1} and $C_{\omega_{n_t}^\vee}$ depends on $\omega_{n_t}^\vee (t)$ (where the other dependencies do not matter here). Since we are after a global estimate independent of $\omega_n^\vee$ and $t$ we have to recall how these constants depend on $\omega_n^\vee$. From Remark \ref{rem: constant} we know that up to some constant $A$ (depending on the Lie-Butcher series, $V$ and the initial conditions but not on the smooth function), the constants $C_{\omega^\vee_n (t)}$ can be bounded by the partial derivatives up to order $p+1$ of $\omega_n^\vee (t)$ on the compact set 
$$
K \coloneq \{y(t) \mid t \in [0,h]\} \cup \{\hat{y}(t) \mid t \in [0,h]\}\subseteq M.
$$ 
In other words, we have to control $\sup_{t\in [0,h]}\lVert \omega_n^\vee (t)\rVert_{p+1,K}$, where $\lVert \cdot\rVert_{p+1,K}$ measures the (sum of) absolute values of partial derivatives on $K$ up to order $p+1$.

Following Step 2, we know that there is a uniform bound in $t$, i.e.\ $$R \coloneq \sup_{1\leq n \leq N+1}\sup_{t \in [0,h]} \lVert \omega_n^\vee (t)\rVert_{p+1,K}  < \infty$$ and thus $\sup_{t\in [0,h]}C_{\omega^\vee_{n_t} (t)} \leq AR \equalscolon C$. 
Hence, from \eqref{est: depend} we conclude that
$$d(\hat{y}(t),y(t)) \leq Ct^{p+1}\leq Ch^{p+1} \quad \forall t \in [0,h].\qedhere$$ 
\end{proof}

Note that the main point in the proof of Theorem \ref{thm: locmet:est} was to establish a uniform bound independent of $t$. We remark that the constant $C$ obtained for the metric estimate still depends on the choices we made in the proof (e.g.\ the choice of $\varepsilon >0$). Thus the proof is a pure existence proof without any claim of optimality of $C$. Indeed, if one chooses $\varepsilon$ very small one should expect $C$ to become bigger as it is derived from an estimate of the derivatives of smooth functions which involve cut-off functions confined to the $\varepsilon$-ball.   

\section{Local to global estimates}
In this section we prove our second main result, a global error estimate for the Lie group methods.
In the last chapter we have seen that Lie group methods satisfy a (local) metric estimate with respect to the geodesic metric. We apply now a suitable version of a Gronwall type estimate for Riemannian manifolds which was first established in \cite{MR2219824}. Let us recall its statement for easy reference:

\begin{setup}
For $X \in \mathcal{X}^p (M)$ (and $p\in\N \cup \{\infty\}$) the covariant derivative induces continuous linear maps
\begin{displaymath}
\nabla X (p) \colon (T_p M, \lVert \cdot\rVert_p) \rightarrow (T_p M, \lVert \cdot\rVert_p),\quad Y_p \mapsto \nabla_{Y_p} X, \quad p\in M,
\end{displaymath}
(cf.\ \cite[Section 1.5]{MR1330918} and \cite[Section VIII, in particular VIII, \S 2 Lemma 2.3]{MR1666820}).\footnote{Covariant derivatives are often only defined for smooth vector fields. However, the $\nabla_X V$ makes sense for vector fields from $\mathcal{X}^p (M)$ (for $p\in \N$ using that $(M,g)$ is smooth) if one accounts for the loss of differentiability.}
The operator norm of these mappings will be denoted by $\lVert \nabla X(p)\rVert_g$. 
\end{setup}

\begin{setup}[{\cite[Corollary 1.6]{MR2219824}}]\label{setup: Gronwall:smooth}
Let $(M,g)$ be a connected and complete Riemannian manifold, $V \in \mathcal{X} (M)$ and $p_0,q_0\in M$. Let $S$ be a minimizing geodesic segment connecting $p_0$ and $q_0$. Choose $T > 0$ such that the flow $\Fl^X$ of the vector field $X$ is defined on $[0,T] \times S$. 
\emph{Then the integral curves $\varphi(t) \coloneq \Fl^X_t (p_0), \psi(t)\coloneq \Fl^X_t(q_0)$ with initial value $p_0$ (resp.\ $q_0$) satisfy the Gronwall type estimate}
\begin{equation}\label{est: Gronwall}
d(\varphi(t),\psi(t)) \leq d(p_0,q_0)e^{C_T t}, \quad t \in [0,T]
\end{equation}
\emph{where $C_T = \sup \{\lVert \nabla X (p)\rVert_g \mid p \in \Fl^X ([0,T]\times S)\}$.}
\end{setup} 

The Gronwall type estimate exhibited in \ref{setup: Gronwall:smooth} has been established in \cite{MR2219824} only for smooth vector fields. We wish to obtain a similar estimate for $\mathcal{X}^p (M)$ vector fields ($p\in \N \cup \{\infty\}$).

\begin{rem}[Estimate \ref{setup: Gronwall:smooth} holds for $X \in \mathcal{X}^p (M), p \in \N$.]\label{rem: CpGronwall}
To prove that the Gronwall estimate \eqref{est: Gronwall} holds also with lower differentiability of the vector field, note that \cite[Corollary 1.6]{MR2219824} is a direct consequence of \cite[Theorem 1.4]{MR2219824}. The proof of said theorem uses the differentiability class of the vector field $V$ only through an application of \cite[Proposition 1.1]{MR2219824} in the proof.
Hence if \cite[Proposition 1.1]{MR2219824} holds for vector fields in $\mathcal{X}^p(M)$ for $p\in \N$ we are done. 

\paragraph{Reexamining the proof of \cite[Proposition 1.1]{MR2219824}:} One needs differentiability of the flow map $\Fl^V_0 (t,x_0)$ (cf.\ \ref{setup: VF:FL} for details on the notation). Namely, the existence of iterated derivatives of the type $\partial_t \partial_{x_0} \Fl^V$ and $\partial_t \partial_{x_0} \Fl^V$ are required. If these second mixed partial derivatives (with respect to $t$ and $x_0$) exist, then the proof can be carried out exactly as presented in \cite[p.\ 134-135]{MR2219824}. 

\paragraph{The case $p\geq 2$.} 
For $p\geq 2$ by standard ODE arguments (cf.\ e.g.\ \cite[Section IV]{MR1666820} or \cite[Proposition 5.13]{MR3342623}) also $\Fl^V_0$ is a $C^p$ map whence the iterated differentials exist.

\paragraph{The case $p=1$.} Looking closer, the derivatives still exist for $p=1$: We call a continuous $f \colon M \times N \rightarrow L$ between (finite dimensional) manifolds a $C^{1,1}$ map if (in charts) the partial derivative with respect to $M$ exists, is continuous and $f$ and the derivative are again continuously differentiable with respect to the $N$ variable (cf.\ \cite[Definition 5.11]{MR3342623}). 
The concept of $C^{1,1}$ (or the general $C^{r,s}$ maps developed in \cite{MR3342623}) captures exactly existence of the mixed partial derivatives needed. 
Now for $V \in \mathcal{X}^1 (M)$, \cite[Proposition 5.13]{MR3342623} asserts that the flow $\Fl^V_0$ is indeed of class $C^{1,1}$. \smallskip

Summing up, the Gronwall estimate \ref{setup: Gronwall:smooth} holds $V \in \mathcal{X}^p(M)$ where $p\in \N \cup \{\infty\}$.
\end{rem}

\begin{thm}[Global error estimate]\label{thm: global}
Consider a vector field $V \in \mathcal{X}^{p+1} (M)$ with $p\in \N_0 \cup \{\infty\}$ on a Riemannian homogeneous space $(M,g)$ together with a sequence $\{\hat{y}_n\}_{i=1,\ldots,n}$ approximating the integral curve of $V$ through $y_0$ at a discrete set of times $t_i$ with $h_i = t_{i+1} - t_i$, and $\max_i h_i = h$. If $\hat{y}$ obeys either of the local estimates above with exponent $p+1$, then we obtain the global estimate
\[
d(y_n,\hat{y}_n) \leq C h^{p},
\]
where $C$ is a constant depending only on $V, y_0$ and $T$.
\end{thm}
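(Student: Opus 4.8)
The plan is to run the classical ``Lady Windermere's fan'' telescoping argument, with the Euclidean Gronwall lemma replaced by its Riemannian counterpart \ref{setup: Gronwall:smooth}, which by Remark~\ref{rem: CpGronwall} is available for $V\in\mathcal{X}^{p+1}(M)$. First I would reduce to a single local hypothesis: by Theorem~\ref{thm: locmet:est} the test-function estimate \eqref{est:1} implies the metric estimate \eqref{est:2}, so without loss of generality $\hat y$ obeys \eqref{est:2}. Moreover, inspecting Remark~\ref{rem: constant} together with the proof of Theorem~\ref{thm: locmet:est}, the local constant in \eqref{est:2} can be taken uniform as the base point of the integral curve ranges over a fixed compact subset of $M$, since it is controlled purely by bounds on finitely many derivatives of $V$ and of the comparison functions over a compact set; call this uniform constant $C_{\mathrm{loc}}$.

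Next I would set up the fan. Fix $T=t_n$ (so $t_0=0$, $\hat y_0=y_0$), let $y(t)=\Fl^V_0(t,y_0)$ be the exact solution on $[0,T]$, choose a compact tube $\mathcal{K}$ around the compact set $y([0,T])$, and put $L:=\sup\{\lVert\nabla V(q)\rVert_g\mid q\in\mathcal{K}\}<\infty$. For each $i$ introduce the auxiliary exact curve $y^{(i)}$, namely the integral curve of $V$ passing through the numerical point $\hat y_i$ at time $t_i$, so that $y^{(0)}(T)=y_n$ and $y^{(n)}(T)=\hat y_n$. The triangle inequality telescopes to
\[
d(y_n,\hat y_n)\le\sum_{i=0}^{n-1}d\bigl(y^{(i)}(T),y^{(i+1)}(T)\bigr).
\]
For each term, $y^{(i)}$ and $y^{(i+1)}$ are integral curves of $V$ which at time $t_{i+1}$ pass through $y^{(i)}(t_{i+1})$ and $\hat y_{i+1}$ respectively; the local estimate \eqref{est:2} gives $d(y^{(i)}(t_{i+1}),\hat y_{i+1})\le C_{\mathrm{loc}}h_i^{p+1}$, and applying the Gronwall estimate \eqref{est: Gronwall} on $[t_{i+1},T]$ yields $d(y^{(i)}(T),y^{(i+1)}(T))\le C_{\mathrm{loc}}h_i^{p+1}e^{L(T-t_{i+1})}\le C_{\mathrm{loc}}e^{LT}h_i^{p+1}$. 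Summing and using $\sum_i h_i^{p+1}\le h^p\sum_i h_i = h^p(t_n-t_0)\le Th^p$ gives $d(y_n,\hat y_n)\le C_{\mathrm{loc}}Te^{LT}h^p=:Ch^p$, a constant depending only on $V$, $y_0$ and $T$.

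The hard part will be the a priori confinement needed to make this rigorous: one must know that the auxiliary curves $y^{(i)}$ exist on $[t_i,T]$, that the minimizing geodesic segments between $y^{(i)}(t_{i+1})$ and $\hat y_{i+1}$ appearing in \eqref{est: Gronwall} exist and lie in $\mathcal{K}$, and that $\Fl^V$ is defined on them up to time $T$, so that both $C_{\mathrm{loc}}$ and $L$ are legitimate bounds along everything in sight. I would handle this by an induction on $i$ combined with shrinking $h$: assuming $\hat y_i\in\mathcal{K}$ and $h$ small, the accumulated error is $O(h^p)$, hence smaller than the width of the tube, so $\hat y_{i+1}$ and the (short) connecting geodesic stay in $\mathcal{K}$; geodesic completeness of the homogeneous space $(M,g)$ keeps the minimizing geodesics available, and continuous dependence of $\Fl^V$ on initial conditions, made uniform via compactness of $y([0,T])$ after possibly shrinking $\mathcal{K}$, guarantees $\Fl^V$ is defined long enough on each segment. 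Once this bootstrap is in place the displayed estimate closes and the theorem follows; as with Theorem~\ref{thm: locmet:est}, the resulting $C$ is a pure existence constant with no claim to optimality.
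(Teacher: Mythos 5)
Your proposal is correct and follows essentially the same route as the paper: the Lady Windermere's fan telescoping (your auxiliary curves $y^{(i)}$ are exactly the transported errors $E_i = d(\varphi_{T_i}(\hat{y}_i),\varphi_{T_{i-1}}(\hat{y}_{i-1}))$ in the paper's proof), the Riemannian Gronwall estimate \ref{setup: Gronwall:smooth} with Remark \ref{rem: CpGronwall} for the reduced differentiability, and Theorem \ref{thm: locmet:est} to reduce the test-function hypothesis to the metric estimate \eqref{est:2}. Your additional attention to uniformity of the local constant and to the confinement/existence issues is a more careful treatment of points the paper handles implicitly (via $C=\max_i C_i$), but it is not a different argument.
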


\begin{proof}
The proof follows the standard ``Lady Windemere's fan'' argument, as per \cite[Section 2.3]{hairer2008solving}. Indeed, for $i=1,\ldots,n$, define the local error 
\[
e_{i} = d\big(\hat{y}_i,\varphi_h(\hat{y}_{i-1})\big),
\]
and the transported local error
\[
E_{i} = d\big(\varphi_{T_i}(\hat{y}_i) , \varphi_{T_{i-1}}(\hat{y}_{i-1})\big),
\]
where $T_i = T - t_i$. From \ref{setup: Gronwall:smooth} and Remark \ref{rem: CpGronwall} (adapting \cite[Corollary 1.6]{MR2219824}) the errors are related by
\[
E_i \leq e^{C_T T_i} e_i
\]
We then use the local metric estimate (\ref{est:2}), if necessary invoking Theorem~\ref{thm: locmet:est} to justify this, obtaining
\[
e_i \leq C_i h_i^{p+1}.
\]
The Lady Windemere's fan estimate then concludes the argument; indeed taking $C = \max_i C_i$ we have
\begin{align*}
d(y_n,\hat{y}_n) &\leq \sum_{i=1}^n E_i \\
&\leq h^p C \big( h_0 e^{C_T T_1} + h_1 e^{C_T T_2} + \ldots \big) \\
&\leq h^p \frac{C}{C_T} \big(e^{C_T T} - 1\big) \qedhere
\end{align*}

\end{proof}

\begin{rem}
Global error estimates for discrete gradient descent methods were recently obtained in \cite{1805.07578v1} and the methods in ibid.\ are very similar to the ones used to derive Theorem \ref{thm: global}. Though ibid.\ concerns itself with discrete gradient methods, it is not hard to see that the arguments given there are universal, i.e.\ could be adapted to analyse the convergence of general numerical methods.

The key difference between our approach and the analysis in \cite{1805.07578v1} is in the basic setting: Studying Lie group integrators we are working by default in a complete Riemannian manifold. On non complete Riemannian manifolds, the local to global argument using a Gronwall inequality \ref{setup: Gronwall:smooth} breaks down (see \cite[Example 1]{MR2219824}). Withou completeness of the manifold the argument holds in general only for complete vector fields. 
Thus the local to global result \cite[Theorem 2]{1805.07578v1} is derived only for complete and smooth vector fields (though on a possibly non complete manifold, see also \cite[Remark after Theorem 2]{1805.07578v1} on how to relax the completeness condition).
Note that in light of Remark \ref{rem: CpGronwall} also the methods in ibid.\ will work for $C^1$-vector fields instead of smooth vector fields.
\end{rem}

\appendix
\section{Auxiliary constructions}\label{app: auxiliary}

In this appendix we collect several auxiliary results which enable us to construct smooth functions needed in the estimates.
We begin with a technical Lemma concerning partitions of unity with some desirable properties:

\begin{lem}\label{lem: pou}
Let $M$ be a paracompact finite dimensional manifold, $o\in M$ and $B$ be an open $o$-neighborhood. There exists a locally finite open cover $\{U_i\}_{i\in I}$ of $M$, such that $I = J \cup \{i_o\}$ and the following holds:
\begin{enumerate}
\item $i_o$ is the unique index such that $o \in U_{i_o}$,
\item $U_{i_o} \subseteq B$,
\item every $U_{i}$ is connected and relatively compact,
\end{enumerate} 
\end{lem}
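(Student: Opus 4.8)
The plan is to build the cover in two stages: first handle the point $o$ separately, then apply a standard paracompactness/shrinking argument to the rest of the manifold, and finally merge the two. The main point is purely topological — we are not using the Riemannian structure at all, only that $M$ is a paracompact, finite-dimensional (hence locally compact, locally connected, second-countable-on-components) manifold.

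First I would fix a chart around $o$ and choose a relatively compact, connected open neighborhood $U_{i_o}$ of $o$ with $\overline{U_{i_o}}\subseteq B$; this is possible since $M$ is locally compact and locally connected, and we may shrink $B$ to lie in a chart if necessary (note the hypothesis only says $B$ is open, so intersecting with a chart domain and taking a small metric or Euclidean ball does the job). Set $A \coloneq M \setminus \overline{U_{i_o}}$, which is open. Next I would cover $A$: every point $x\in A$ has a relatively compact connected open neighborhood $V_x$ with $\overline{V_x}\subseteq A$ (again by local compactness and local connectedness, shrinking into a chart). The family $\{V_x\}_{x\in A}$ together with $\{U_{i_o}\}$ is an open cover of $M$; since $M$ is paracompact we extract a locally finite open refinement. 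A refinement of a cover by connected relatively compact sets can itself be taken to consist of connected relatively compact sets — one either shrinks each refinement member inside a single $V_x$ and then replaces it by the connected component containing a chosen point, or more cleanly one invokes that in a manifold every open cover has a locally finite refinement by relatively compact connected open sets (coordinate balls). Discard from this refinement every member that meets $\overline{U_{i_o}}$; since the refinement is locally finite and $\overline{U_{i_o}}$ is compact, only finitely many are discarded, and what remains, call it $\{U_i\}_{i\in J}$, is still a locally finite family of connected relatively compact open subsets of $A$ covering $A$ (one must check the discarded sets were not needed to cover $A$; alternatively refine the cover $\{V_x\}_{x\in A}\cup\{U_{i_o}\}$ and then throw away only those members not contained in $A$, which are contained in $U_{i_o}\subseteq B$ anyway, so they can be absorbed). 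Then $\{U_i\}_{i\in J\cup\{i_o\}}$ is the desired cover: it is locally finite (finitely many members near $o$, locally finite elsewhere), each member is connected and relatively compact, $U_{i_o}\subseteq B$, and $i_o$ is the unique index whose member contains $o$ because all $U_i$, $i\in J$, lie in $A=M\setminus\overline{U_{i_o}}$ and hence avoid $o$.

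The one genuine subtlety — the step I expect to need the most care — is reconciling "locally finite refinement" (which paracompactness gives) with the three bookkeeping requirements simultaneously: connectedness, relative compactness, and the uniqueness of the index at $o$. Connectedness is not automatically preserved under refinement, so the cleanest route is to first produce a locally finite refinement by arbitrary open sets, then replace each member by the (at most countably many, but we only keep those meeting a fixed point or a fixed compact set) connected components intersected with a relatively compact coordinate ball; local finiteness survives because each original member met only finitely many others and sits in a chart where it has locally finitely many components relevant to the cover. The uniqueness at $o$ is then arranged by the surgery above: remove every refinement member that meets $\overline{U_{i_o}}$ (finitely many, by compactness plus local finiteness) after first ensuring $U_{i_o}$ alone already covers a neighborhood of $o$. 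Once these combinatorial adjustments are made, properties (1)–(3) are immediate from the construction, and I would write the proof in roughly that order: construct $U_{i_o}$; cover the complement; refine and regularize the refinement; prune the members hitting $\overline{U_{i_o}}$; verify the four bullet points.
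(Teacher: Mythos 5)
There is a genuine gap at the very first step of your construction: with $A = M\setminus\overline{U_{i_o}}$ and all $V_x$ chosen so that $\overline{V_x}\subseteq A$, the family $\{V_x\}_{x\in A}\cup\{U_{i_o}\}$ is \emph{not} an open cover of $M$. Its union is contained in $A\cup U_{i_o} = M\setminus\partial U_{i_o}$, so every boundary point of $U_{i_o}$ (and $\partial U_{i_o}\neq\emptyset$ in all nontrivial cases, since $U_{i_o}$ is a relatively compact chart neighborhood) is omitted. Paracompactness can then not be invoked to produce a locally finite refinement covering $M$, and the family you end up with covers only $M\setminus\partial U_{i_o}$, so it is not a cover as required by the lemma. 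Your pruning step has the same defect built in: discarding every refinement member that \emph{meets} $\overline{U_{i_o}}$ would, even after the initial cover is repaired, again strip $\partial U_{i_o}$ of all its covering sets, because $U_{i_o}$ itself does not contain its boundary; one may only discard members that are genuinely redundant (e.g.\ those contained in $U_{i_o}$, or those containing $o$).

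The repair is exactly the device the paper uses: separate the set you remove from $M$ from the set $U_{i_o}$ itself. The paper chooses compact neighborhoods $C_1,C_2$ of $o$ with $C_1\subseteq U_{i_o}\subseteq\overline{U}_{i_o}\subseteq C_2^\circ\subseteq C_2\subseteq B$, covers the open set $M\setminus C_1$ (not $M\setminus\overline{U_{i_o}}$) by a locally finite family of connected, relatively compact chart domains, and then discards only those members contained in $U_{i_o}$; since $U_{i_o}\cup(M\setminus C_1)=M$, coverage survives. Local finiteness at points near $o$ is then not automatic (the refinement was locally finite only in $M\setminus C_1$) and is restored by a connectedness argument: any retained member meeting $U_{i_o}$ also meets its complement, hence crosses the compact annulus $C_2\setminus U_{i_o}$, and only finitely many members do. Your alternative idea of refining over all of $M$ at once would indeed make local finiteness automatic and is salvageable, but then you must remove only a small compact neighborhood of $o$ strictly inside $U_{i_o}$ (or just the point $o$) when forming the auxiliary cover, and prune precisely the refinement members containing $o$ (which necessarily lie in $U_{i_o}$), rather than all members touching $\overline{U_{i_o}}$. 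As written, the proposal does not yield a cover of $M$.
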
 

\begin{proof}
Since $M$ is locally compact, we can choose a connected manifold chart $(U_{i_o},\varphi_{i_o})$ and compact $o$-neighborhoods $C_1,C_2$ of $o$ such that the following inclusions hold:
$$o \in C_1 \subseteq U_{i_o}  \subseteq \overline{U}_{i_o} \subseteq C_2^\circ \subseteq C_2 \subseteq B$$
(where $\overline{U}_{i_o}$ is the closure and $C_2^\circ$ the interior).
Then $U \coloneq M \setminus C_1$ is open and metrisable, whence paracompact. Following \cite[II, \S 3 Theorem 3.3]{MR1666820} there is a locally finite cover of $U$ by charts $(U_j,\varphi_j)_{j\in J'}$ such that $U_j$ is connected and relatively compact. 
Let us now throw out all elements of the cover which are contained in $U_{i_o}$, i.e.\ define $J \coloneq \{ j \in J'\mid U_j \cap M \setminus U_{i_o} \neq \emptyset\}$ and set $I \coloneq J \cup \{i_o\}$.
By construction $o \in U_i$ for $i\in I$ if and only if $i=i_o$, $U_{i_o} \subseteq B$ and every $U_i$ is connected and relatively compact. 
To prove that $\{U_i\}_{i\in I}$ is a locally finite cover of $M$, we observe that $\{U_i\}_{i\in I}$ covers $M$ by construction. Now $K \coloneq C_2 \setminus U_{i_o} \subseteq U$ is compact, whence only finitely many elements of the locally finite cover $\{U_j\}_{j\in J}$ intersect it. This means that only finitely many of the sets $U_j, j\in J$ intersect $U_{i_o}$, whence $\{U_i\}_{i\in I}$ is locally finite. 
\end{proof}

We now prove Lemma \ref{lem: smoothaway} whose statement we repeat here for  convenience.

\begin{lem}
For $\varepsilon >0$ and $(M,g)$ a connected complete Riemannian manifold, there exists $F_\varepsilon \in C^\infty (M)$ with the following properties.
\begin{enumerate} 
\item $F_\varepsilon(o)=0$ and $F_\varepsilon (x) \geq 0,\ \forall x \in M$,
\item if $d(x,o)\geq\varepsilon$, then $F_\varepsilon (x) \geq d(x,o)$.
\end{enumerate}
\end{lem}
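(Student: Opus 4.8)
The plan is to build $F_\varepsilon$ by patching together local pieces using the partition of unity from Lemma~\ref{lem: pou}, where each local piece is a smoothed and suitably inflated version of the function $x \mapsto d(x,o)$. First I would apply Lemma~\ref{lem: pou} with $B$ a small metric ball around $o$ whose closure lies in a chart, obtaining a locally finite cover $\{U_i\}_{i \in I}$, $I = J \cup \{i_o\}$, with each $U_i$ connected and relatively compact and only $U_{i_o} \ni o$. Let $\{\chi_i\}_{i \in I}$ be a smooth partition of unity subordinate to this cover. On each $U_j$ with $j \in J$ the point $o$ is absent, so $d(\cdot,o)$ is bounded below by a positive constant on $\overline{U_j}$; however $d(\cdot,o)$ is only continuous (not smooth) there because of the cut locus. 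The key step is therefore to replace $d(\cdot,o)$ on a neighbourhood of $\overline{U_j}$ by a smooth function $g_j$ with $g_j \geq d(\cdot,o)$ pointwise on that neighbourhood — this can be done, e.g., by convolution-type smoothing in a chart (mollifying $d(\cdot,o)$ and then adding a small constant to absorb the mollification error, using uniform continuity of $d(\cdot,o)$ on a compact set), or by covering $\overline{U_j}$ by finitely many balls on which $d(\cdot,o)$ is within $\delta$ of a constant and taking a smooth upper envelope. On $U_{i_o}$, since $F_\varepsilon$ need not dominate $d(\cdot,o)$ there (property~2 only concerns $d(x,o) \geq \varepsilon$, and by shrinking $B$ we may assume $U_{i_o} \subseteq B_\varepsilon^d(o)$), I simply take $g_{i_o} \equiv 0$.

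I would then set $F_\varepsilon \coloneq \sum_{i \in I} \chi_i \, g_i$. Local finiteness of the cover makes this a well-defined smooth function. For property~1: $F_\varepsilon(o) = \sum_i \chi_i(o) g_i(o) = \chi_{i_o}(o) g_{i_o}(o) = 0$ since $o \notin U_j$ for $j \in J$, and $F_\varepsilon \geq 0$ because each $g_i \geq 0$ and each $\chi_i \geq 0$. For property~2: if $d(x,o) \geq \varepsilon$ then $x \notin U_{i_o}$, so $\chi_{i_o}(x) = 0$ and
\[
F_\varepsilon(x) = \sum_{j \in J} \chi_j(x) g_j(x) \geq \sum_{j \in J} \chi_j(x) d(x,o) = d(x,o) \sum_{i \in I} \chi_i(x) = d(x,o),
\]
using $g_j(x) \geq d(x,o)$ on the support of $\chi_j$ and $\chi_{i_o}(x) = 0$ again in the last equality. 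This gives the claim; note the argument actually yields the stronger conclusion with ``$d(x,o) > \varepsilon$'' replaced by ``$d(x,o) \geq \varepsilon$'', matching the appendix restatement.

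The main obstacle is the construction of the smooth dominating functions $g_j$ on neighbourhoods of the $\overline{U_j}$: one must ensure simultaneously that $g_j$ is genuinely smooth, that $g_j \geq d(\cdot,o)$ holds on an open set containing $\overline{U_j}$ (not just on $U_j$, so that the inequality survives multiplication by $\chi_j$ whose support may stick out slightly), and that $g_j \geq 0$. The cleanest route is probably: for each $j$, cover the compact set $\mathrm{supp}\,\chi_j$ by finitely many open sets on each of which $d(\cdot,o)$ oscillates by less than some $\delta_j > 0$, choose on each such set the constant equal to the supremum of $d(\cdot,o)$ there, glue these constants by an auxiliary partition of unity, and observe the result dominates $d(\cdot,o)$ and is smooth; positivity is automatic since $d \geq 0$. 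Relative compactness of the $U_i$ from Lemma~\ref{lem: pou} is exactly what makes these finite subcovers and uniform oscillation bounds available.
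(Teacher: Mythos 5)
Your proposal is correct and follows essentially the same route as the paper: the cover and partition of unity from Lemma~\ref{lem: pou}, local domination of $d(\cdot,o)$ on the relatively compact patches, and the identity $\sum_{i}\chi_i \equiv 1$ combined with $\chi_{i_o}\equiv 0$ outside $B_\varepsilon^d(o)$. The only difference is that the paper avoids your smoothing step altogether by taking each local dominating function to be the constant $M_j=\max\{\varepsilon,\sup_{y\in\overline{U}_j}d(o,y)\}$ (finite by relative compactness), so no mollification or secondary partition of unity is needed.
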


\begin{proof}
Let $\varepsilon >0 $ and denote by $B \coloneq B_\varepsilon^d (o)$ the metric ball of radius $\varepsilon$ around $o$.
Apply now Lemma \ref{lem: pou} with the above choice of $B$ to obtain a locally finite open cover $\{U_i\}_{i\in I}$ of $M$ with a unique elemnt $U_{i_o}$ such that $o\in U_{i_o} \subseteq B$.
Following \cite[II, \S 3 Corollary 3.8]{MR1666820} we pick a smooth partition of unity $\{\chi_i\}_{i\in I}$ subordinate to the cover $\{U_i\}_{i\in I}$. Note that by construction of the cover, we must have $\chi_{i_0} (o) = 1$.
Define the constants $M_j \coloneq \max\{\varepsilon, \sup_{y \in \overline{U}_j} d(o,y)\}$ for $j\in J$. By compactness of $\overline{U}_j$ and continuity of the Riemannian distance (follows from \cite[Theorem 1.9.5]{MR1330918}), the $M_j$ are finite.
Hence we can build a family of smooth function:
$$f_{i}  (x) \coloneq \begin{cases}
  \varepsilon(1- \chi_{i_0} (x)) & \text{ for } i = i_o\\
   M_j \chi_j(x) & \text{ for } i=j\in J\end{cases} \quad i \in I.$$
Observe now that since the $\{\chi_i\}_{i\in I}$ is a partition of unity, their supports form a locally finite family $\{\text{supp} \chi_i\}_{i\in I}$. We deduce that the family of supports for the functions $f_i$ is also locally finite, whence we can define a smooth function  
$$F_\varepsilon (x) \coloneq \sum_{i \in I} f_i (x) \quad x \in M$$
which satisfies $F_\varepsilon (o)=0$ and $F_{\varepsilon}(x) \geq 0$ for all $x\in M$. If $x \in M \setminus B$, there is a finite non empty $L_x \subseteq J$ such that $x \in U_i$ if and only if $i \in L_x$.
 In particular $\sum_{i \in L_x} \chi_i (x) = 1$ and as $x \in U_i$ for every $i \in L_x$ by construction one has $d(x,o)\leq M_i$ for all $i\in L_x$.
 Thus we deduce that 
 $$d(x,o) \leq  \min_{i\in L_x} \{M_i\} \sum_{i\in L_x}  \chi_i (x) \leq   \sum_{i\in L_x} M_i \chi_i (x) \leq \sum_{i \in L_x \cup \{i_o\}} f_i(x) = F_\varepsilon (x).$$  
\end{proof}

Finally, we construct a family of smooth functions which allows us to obtain estimates on the Riemannian distance for points close to $o$. This is Lemma \ref{lem: smoothnear} whose statement we repeat for the readers convenience.

\begin{lem}
Let $\varepsilon >0$ be sufficiently small that the closure of the metric ball $B_\varepsilon^d (o)$ is contained in a manifold chart $(U,\varphi)$. Then there is $N \in \N$ and a family $\{f_n\}_{1 \leq n\leq N} \subseteq C^\infty (M)$ with the following properties 
\begin{enumerate}
\item $f_n (o)=0$,
\item if $d(x,o)< \varepsilon$ then there is $1\leq n_x\leq N$ such that $f_n (x) \geq d(x,o)$.
\end{enumerate}
\end{lem}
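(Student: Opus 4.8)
The plan is to use that, read off in the chart $(U,\varphi)$, the geodesic distance from $o$ is controlled above by the Euclidean norm of the local coordinates, combined with the elementary identity $\sum_{i=1}^m|v^i|=\max_{\sigma\in\{-1,+1\}^m}\sum_{i=1}^m\sigma_iv^i$ on $\R^m$ (here $m=\dim M$), which exhibits the $\ell^1$-norm as a maximum of $2^m$ linear functions. After composing $\varphi$ with a translation we may assume $\varphi(o)=0$.

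\textbf{Step 1 (comparing $d(\cdot,o)$ with the coordinates).} First I would produce a constant $C_0>0$ with $d(x,o)\le C_0\sum_{i=1}^m|\varphi^i(x)|$ for every $x\in B_\varepsilon^d(o)$. Since $\varphi(B_\varepsilon^d(o))$ is an open neighbourhood of $0$ in $\R^m$, choose $r>0$ with $\overline{D}_r(0)\subseteq\varphi(B_\varepsilon^d(o))$ (the closed Euclidean ball of radius $r$) and set $W\coloneq\varphi^{-1}(\overline{D}_r(0))$, a compact neighbourhood of $o$ with $W\subseteq B_\varepsilon^d(o)$. The coordinate matrix $(g_{ij})$ of the metric is continuous and positive definite, so on the compact set $\overline{D}_r(0)$ there is $\Lambda>0$ with $g_{ij}(u)v^iv^j\le\Lambda\lVert v\rVert^2$ for all $u\in\overline{D}_r(0)$ and $v\in\R^m$. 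For $x\in W$ the radial curve $s\mapsto\varphi^{-1}(s\,\varphi(x))$, $s\in[0,1]$, stays in $W$, runs from $o$ to $x$, and has Riemannian length at most $\sqrt\Lambda\,\lVert\varphi(x)\rVert$; hence $d(x,o)\le\sqrt\Lambda\,\lVert\varphi(x)\rVert$. For $x\in B_\varepsilon^d(o)\setminus W$ one has instead the crude bounds $d(x,o)<\varepsilon$ and $\lVert\varphi(x)\rVert>r$, so $d(x,o)<(\varepsilon/r)\,\lVert\varphi(x)\rVert$. Taking $C_0\coloneq\max\{\sqrt\Lambda,\varepsilon/r\}$ and using $\lVert\varphi(x)\rVert\le\sum_i|\varphi^i(x)|$ gives the claim.

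\textbf{Step 2 (building the functions).} Pick a cut-off $\psi\in C^\infty(M)$ with $0\le\psi\le1$, $\psi\equiv1$ on $\overline{B_\varepsilon^d(o)}$ and $\operatorname{supp}\psi\subseteq U$ (a standard smooth Urysohn function for paracompact $M$, cf.\ \cite[II, \S 3]{MR1666820}). For $\sigma\in\{-1,+1\}^m$ define $f_\sigma$ by $f_\sigma\coloneq C_0\,\psi\cdot\sum_{i=1}^m\sigma_i\varphi^i$ on $U$ and $f_\sigma\coloneq0$ on $M\setminus\operatorname{supp}\psi$; these expressions agree on the overlap $U\setminus\operatorname{supp}\psi$, so $f_\sigma\in C^\infty(M)$ is well defined. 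Relabelling them $f_1,\dots,f_N$ with $N=2^m=2^{\dim M}$, we have $f_n(o)=C_0\,\psi(o)\sum_i\sigma_i\varphi^i(o)=0$ since $\varphi(o)=0$, which is property (1). If $d(x,o)<\varepsilon$ then $x\in\overline{B_\varepsilon^d(o)}\subseteq U$ and $\psi(x)=1$, hence $\max_{1\le n\le N}f_n(x)=C_0\sum_{i=1}^m|\varphi^i(x)|\ge d(x,o)$ by Step 1, and taking $n_x$ to be a maximising index yields property (2).

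The main obstacle is Step 1: $d(\cdot,o)$ is neither smooth at $o$ nor uniformly equivalent to a flat metric across all of $\varphi(U)$, and the only robust way to bound it above by chart coordinates is to exhibit an explicit short curve. The radial segment does this, but only after shrinking to a genuine Euclidean ball $W$ sitting inside $\varphi(B_\varepsilon^d(o))$, since $\varphi(B_\varepsilon^d(o))$ itself need not be star-shaped; the leftover region $B_\varepsilon^d(o)\setminus W$ then has to be absorbed by the trivial estimate $d(\cdot,o)<\varepsilon$. Once $C_0$ is secured, the sign-vector trick and the cut-off in Step 2 are routine.
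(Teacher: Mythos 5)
Your argument is correct. It follows the same overall strategy as the paper's proof: dominate $d(\cdot,o)$ on the ball by a constant times the $\ell^1$-norm of the chart coordinates, realize that $\ell^1$-norm as the maximum of $N=2^{\dim M}$ linear functions via the sign-vector trick, and multiply by a cut-off supported in $U$ that is identically $1$ on the closed ball --- your Step 2 is essentially the paper's ``claim'' verbatim. Where you differ is Step 1: the paper obtains the comparison $d(x,y)\lesssim d_1(\varphi(x),\varphi(y))$ by citing that charts are locally bi-Lipschitz and restricting to the compact set $K=\overline{B_\varepsilon^d(o)}$, whose compactness is supplied by completeness of the homogeneous Riemannian manifold together with Hopf--Rinow; you instead prove directly the one-sided bound that is actually used, by bounding the metric coefficients on a Euclidean ball $\overline{D}_r(0)\subseteq\varphi(B_\varepsilon^d(o))$, measuring the radial curve to get $d(x,o)\le\sqrt{\Lambda}\,\lVert\varphi(x)\rVert$ on $W=\varphi^{-1}(\overline{D}_r(0))$, and absorbing the leftover region $B_\varepsilon^d(o)\setminus W$ with the crude estimate $d(x,o)<\varepsilon<(\varepsilon/r)\lVert\varphi(x)\rVert$. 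This buys you a self-contained elementary argument that does not invoke the bi-Lipschitz folklore nor the compactness of the closed metric ball (hence no appeal to completeness at this point), at the cost of a slightly less uniform constant $C_0=\max\{\sqrt{\Lambda},\varepsilon/r\}$; the paper's route is shorter given the cited facts and yields the two-sided Lipschitz comparison, of which only one direction is needed here.
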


\begin{proof}
As a homogeneous Riemannian manifold, $(M,g)$ is complete, see \ref{setup: hommfd}. Thus the closed and bounded set $K \coloneq \overline{B_\varepsilon^d (o)}$ is compact by the Hopf-Rinow theorem  \cite[Theorem 1.65]{MR2371700}. We may assume without loss of generality that $\varphi(o)=0$.
Now by standard arguments \footnote{see eg.\ the answer by Beno\^it Kloeckner at \url{https://mathoverflow.net/a/236851/}} for every smooth Riemannian manifold the charts are locally bi-Lipschitz to Euclidean space. Since $K \subseteq U$ is compact, we may (after shrinking $U$ if necessary) assume that $\varphi$ is bi-Lipschitz with respect to the euclidean distance $d_{2}$ on $\R^n$ and the geodesic distance on $U$, i.e.\ 
\begin{equation}\label{eq: Lipnorm}
 d_{2} (\varphi (x),\varphi(y)) \lesssim d(x,y) \lesssim  d_{2}(\varphi (x),\varphi(y)),\quad \forall x,y \in U,
\end{equation}
where ``$\lesssim$'' is used to denote an inequality up to a (multiplicative) constant. Using the equivalence of norms on $\R^n$, we now replace the euclidean distance $d_{2}$ in \eqref{eq: Lipnorm} by the distance $d_1$, induced by the $\ell^1$-norm $\lVert x\rVert_1 \coloneq \sum_{i=1}^n |x_i|$. 
We claim now, that there is $N\in \N$ and a family of smooth functions $\{P_n\}_{1\leq n\leq N} \subseteq C^\infty (\varphi(U))$ which satisfy the following properties for all $1 \leq n\leq N$:
\begin{enumerate}
\item $P_n (\varphi(o)) = H_n (0) = 0$ 
\item if $x \in \varphi (K)$ then there exists $1\leq n_x \leq N$ such that  $\lVert x\rVert = d_1 (x,\varphi(o)) \leq P_{n_x} (x)$.
\end{enumerate}
If this were true, then the proof can be finished as follows: 
Let $L$ be the (smallest) Lipschitz constant such that $d(x,y) \leq Ld_1(\varphi(x),\varphi(y)), \quad \forall x,y\in U$.
Since $U$ is an open neighborhood of $K$, we can choose a smooth cut-off function $\xi \colon M \rightarrow [0,1]$ such that $\xi|_K \equiv 1$ and $\xi|_{M\setminus U} \equiv 0$. Then we set 
$$f_n \colon M \rightarrow \R ,\ x \mapsto \begin{cases} L\xi (x) \cdot P_n \circ \varphi (x) & \text{ if } x \in U \\ 0 & \text{otherwise}.\end{cases},$$
 Clearly we have $f_n \in C^\infty (M)$ and $f_n(o)= 0$ for all $1\leq n \leq N$. If $d(x,o) <\varepsilon$, then $x \in K$, whence there is $n_x \coloneq n_{\varphi(x)}$ as in property 2.\ of the family $\{P_n\}_n$ such that  
$$f_{n_x} (x) =  L\underbrace{\xi (x)}_{=1} \cdot P_{n_{\varphi(x)}} \circ \underbrace{\varphi (x)}_{\in \varphi(K)} \geq L d_1 (\varphi(x),\varphi(o)) \geq d(x,o).$$
\textbf{Proof of the claim:} We have to construct smooth functions which satisfy properties 1.\ and 2. To this end, consider for $1\leq k \leq n$ the smooth (linear) functions 
\begin{align*}p_{k,0} \colon \R^n \rightarrow \R, \quad (x_1, \ldots , x_n) \mapsto x_k, \qquad p_{k,1} \colon \R^n \rightarrow \R,\quad p_{k,1}(x) \coloneq -p_{k,0} (x)
\end{align*} 
Construct for every multiindex $\alpha = (\alpha_1, \ldots, \alpha_n) \in \{0,1\}^n$ a function 
$$P_{\alpha} (x) \coloneq \sum_{i=1}^n p_{i,\alpha_i} (x), \quad x\in \R^n$$
Set $N \coloneq 2^n$ and choose an arbitrary order of the multiindices $\alpha$ (naming the $i$th $\alpha^i$, to define the desired family $P_n \coloneq P_{\alpha^n}|_{U}$ for $1\leq n\leq N$. Obviously $P_n (0)=0$ and from the construction it is clear that the $P_n$ satisfy property 2.
\end{proof}

\addcontentsline{toc}{section}{References}

\bibliography{L_Int}

\end{document}